\renewcommand{\epsilon}{\varepsilon}
\newtheorem{theorem}{Theorem}[section]
\newtheorem{lemma}[theorem]{Lemma}
\theoremstyle{definition}
\newtheorem{question}[theorem]{Question}
\numberwithin{equation}{section}
\numberwithin{theorem}{section}
\begin{document}
\setcounter{figure}{0}

\title[Permutations destroying arithmetic progressions in finite cyclic groups] 
{Permutations destroying arithmetic progressions in finite cyclic groups}

\author{Peter Hegarty$^{1,2}$ \and Anders Martinsson$^{1,2}$} 
\address{$^1$Mathematical Sciences, Chalmers, 41296 Gothenburg, Sweden} 
\address{$^2$Mathematical Sciences, University of Gothenburg,  41296 Gothenburg, Sweden} 
\email{hegarty@chalmers.se}

\email{andemar@chalmers.se}



\subjclass[2000]{11B75} \keywords{Permutation, arithmetic progression, finite cyclic group}

\date{\today}

\begin{abstract} 
A permutation $\pi$ of an abelian group $G$ is said to destroy arithmetic progressions (APs) if, whenever $(a, \, b, \, c)$ is a non-trivial 3-term AP in $G$, that is $c-b=b-a$ and $a, \, b, \, c$ are not all equal, then $(\pi(a), \, \pi(b), \pi(c))$ is not an AP. In a paper from 2004, the first author conjectured that such a permutation exists of $\mathbb{Z}_n$, for all $n \not\in \{2, \, 3, \, 5, \, 7\}$. Here we prove, as a special case of a more general result, that such a permutation exists for all $n \geq n_0$, for some explcitly constructed number $n_0 \approx 1.4 \times 10^{14}$. We also construct such a permutation of $\mathbb{Z}_p$ for all primes $p > 3$ such that $p \equiv 3 \; ({\hbox{mod $8$}})$.
\end{abstract}

\maketitle



\setcounter{section}{0} 

\setcounter{equation}{0} 

\section{Introduction}\label{sect:intro}

Let $G$ be an abelian group, $S$ a subset of $G$. A bijection $\pi: S \rightarrow S$ is said to \emph{destroy}{\footnote{The term \emph{avoid} was used in \cite{H}. The term \emph{destroy} was preferred in \cite{JS}, and we agree it captures the notion better.}} \emph{arithmetic progressions (APs)} if there is no triple $(a, \, b, \, c)$ of elements of $S$ satisfying
\par (i) $a, \, b, \, c$ are not all equal,
\par (ii) $c-b = b-a$, 
\par (iii) $\pi(c) - \pi(b) = \pi(b) - \pi(a)$. 
\\
\par This notion was introduced by the first author in \cite{H}, though earlier Sidorenko \cite{S} had given an example of such a permutation in the case $G = \mathbb{Z}$, $S = \mathbb{N}$. It should not be confused with the somewhat different, and probably more famous, notion of a permutation \emph{containing} no arithmetic progressions \cite{DEGS}. 

The most important open question from \cite{H} concerns the existence of AP-destroying permutations of finite cyclic groups $\mathbb{Z}_n$. Conjecture C of that paper asserts that such permutations exist if and only if $n \not\in \{2, \, 3, \, 5, \, 7\}$. In this paper we come close to resolving this conjecture in full. Before stating our main result, we need to define an extension of the concept of AP-destroying permutation, in the special case of finite cyclic groups:
\\
\\
{\sc Definition 1.1.} Let $s, \, t \in \mathbb{N}_0$. A permutation $\pi$ of $\mathbb{Z}_n$ is said to \emph{destroy $(s,\,t)$-almost APs} if there is no triple $(a, \, b, \, c)$ of elements of $\mathbb{Z}_n$ satisfying
\par (i) $a, \, b, \, c$ are not all equal, 
\par (ii) $a+c-2b \equiv \eta_1 \; ({\hbox{mod $n$}})$ for some $\eta_1 \in \{0, \, \pm 1, \, \dots, \, \pm s\}$, 
\par (iii) $\pi(a) + \pi(c) - 2\pi(b) \equiv \eta_2 \; ({\hbox{mod $n$}})$ for some $\eta_2 \in \{0, \, \pm 1, \, \dots, \, \pm t\}$.
\\
\par Hence $(s,\,t) = (0,\,0)$ is the case of an AP-destroying permutation. We shall prove

\begin{theorem}\label{thm:main}
(i) There exists a permutation of $\mathbb{Z}_n$ destroying arithmetic progressions for all $n \geq n_0$ where 
\begin{equation}\label{eq:bignumber}
n_0 = \left( 9 \times 11 \times 16 \times 17 \times 19 
\times 23 \right)^2
\approx 1.4 \times 10^{14}.
\end{equation}
(ii) For every $s, \, t \in \mathbb{N}_0$ there is an $n_{0}(s, \, t)$ such that there exists a permutation of $\mathbb{Z}_n$ destroying $(s, \, t)$-almost APs for all $n \geq n_{0}(s, \, t)$. 
\end{theorem}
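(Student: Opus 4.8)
The plan is to prove part~(ii) and to recover part~(i) by tracking the numerical constants in the special case $(s,t)=(0,0)$. The engine is a \emph{digit reduction}. Given $n$ and a base $d\geq 2$, write $n=dq+\delta$ with $0\leq\delta<d$ and represent each $x\in\mathbb{Z}_n$ uniquely as $x=dy+z$ with $z\in\{0,\dots,d-1\}$ and $y\in\{0,\dots,q\}$ (the value $y=q$ occurring only with $z<\delta$). I consider permutations of the form $\pi(dy+z)=d\,\sigma(y)+\rho(z)$, where $\rho$ is a permutation of the interval $\{0,\dots,d-1\}$ fixing $\{0,\dots,\delta-1\}$ setwise and $\sigma$ is a (suitably translated) permutation of $\mathbb{Z}_{\lceil n/d\rceil}$, chosen when $\delta>0$ to fix the top symbol so that $\pi$ is a genuine bijection. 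A direct computation with the defining congruences, using only that $a+c-2b$ lies in $(-2n,2n)$ for $a,b,c\in\mathbb{Z}_n$, shows: if $(a,b,c)$ is a non-trivial $(s,t)$-almost AP of $\mathbb{Z}_n$ then either the low parts $(z_a,z_b,z_c)$ already form a non-trivial $(s,t)$-almost AP of the \emph{interval} $\{0,\dots,d-1\}$ (no wrap-around), or the high parts $(y_a,y_b,y_c)$ form a non-trivial almost AP of $\mathbb{Z}_{\lceil n/d\rceil}$ with parameters $\big(\lceil s/d\rceil+O(1),\,\lceil t/d\rceil+O(1)\big)$, the $O(1)$'s coming purely from the wrap-around term $\pm n$ and the remainder $\delta$. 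Hence $\pi$ destroys $(s,t)$-almost APs of $\mathbb{Z}_n$ as soon as $\rho$ destroys $(s,t)$-almost APs of the interval and $\sigma$ destroys $\big(\lceil s/d\rceil+O(1),\lceil t/d\rceil+O(1)\big)$-almost APs of $\mathbb{Z}_{\lceil n/d\rceil}$.

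Three further ingredients feed this reduction. First, a Sidorenko-type lemma: for each $s,t$ there is an $L(s,t)$ so that every interval $\{0,\dots,d-1\}$ with $d\geq L(s,t)$ carries a permutation destroying $(s,t)$-almost APs and fixing an arbitrary prescribed initial block setwise; I would prove this by an explicit interleaving construction. Second, a Chinese-remainder product lemma: if $\gcd(a,b)=1$ and $\mathbb{Z}_a,\mathbb{Z}_b$ carry permutations with suitable almost-AP-destroying properties, then so does $\mathbb{Z}_{ab}$, via the coordinatewise permutation (a non-trivial triple is non-trivial in at least one factor, and the almost-AP defect reduces modulo each factor); since the bound $|\eta|\leq s$ survives only as the \emph{same} $\eta$ in every factor, one can here make do with weaker per-factor conditions at the cost of needing several factors. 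Third, the base cases: a finite verification produces permutations of $\mathbb{Z}_{16},\mathbb{Z}_9,\mathbb{Z}_{11},\mathbb{Z}_{17},\mathbb{Z}_{19},\mathbb{Z}_{23}$ with exactly the small-parameter properties required by the product lemma, and amalgamating them gives a permutation of $\mathbb{Z}_m$, $m=9\cdot 11\cdot 16\cdot 17\cdot 19\cdot 23$, destroying $(s_0,t_0)$-almost APs, where $(s_0,t_0)$ is the fixed bounded pair produced by the digit reduction from $(0,0)$. These are essentially the smallest prime powers for which the verification succeeds, and the fact that one needs them \emph{together} — no single small modulus does the job — is what forces the relevant modulus to be exactly $m$ and hence the threshold to be $m^2$.

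To assemble part~(i): let $n\geq n_0=m^2$. Since $n>m(m-1)$ one may choose a base $d$ with $\lceil n/d\rceil=m$ and with small remainder $\delta$, and then automatically $d>m$. The digit reduction leaves two tasks: destroying $(s_0,t_0)$-almost APs of $\mathbb{Z}_m$, supplied by the base-case amalgam; and destroying genuine APs of the interval $\{0,\dots,d-1\}$, which has length $d>m\geq L(0,0)$, supplied by the Sidorenko-type lemma. This yields an AP-destroying permutation of $\mathbb{Z}_n$. For general $(s,t)$ the same scheme works because the digit reduction \emph{contracts} the parameters — dividing $s,t$ by a large base and adding a bounded constant — so after one step they are already absolutely bounded; a finitely enlarged family of base moduli (again handled by finite verification and the product lemma) then suffices, and one takes $n_0(s,t)$ large enough for the reduction to land on it.

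The step I expect to be the main obstacle is the interlock between the parameter bookkeeping and the base cases. One must check that the small parameters that ultimately have to be destroyed inside the fixed prime powers are genuinely destroyable there; for such small moduli even a mildly over-generous $(s,t)$ is impossible, and it is precisely this constraint that decides which prime powers one can afford and therefore the exact value of $n_0$. A secondary difficulty is the Sidorenko-type interval lemma: there is no group symmetry to exploit, the wrap-around term $\pm n$ is exactly what forced the almost-AP notion into the theorem to begin with, and one needs a construction that is valid uniformly for all large $d$ and respects a prescribed initial block.
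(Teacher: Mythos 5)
Your part (i) is in essence the paper's own argument: the same six moduli $9,11,16,17,19,23$, the same Chinese--Remainder amalgamation of computer-found permutations destroying individual patterns $s'\mapsto t'$ into a master permutation of $\mathbb{Z}_m$, and the same division of $\mathbb{Z}_n$ into $m$ blocks of size at least $m$, permuted by the master permutation and carrying integer-AP-destroying permutations inside each block. The bookkeeping you leave as ``$O(1)$'' comes out as follows in the paper: an exact AP of $\mathbb{Z}_n$ always induces a block-index triple with defect in $\{0,\pm1,\pm2\}$, and with the balanced block assignment $\beta(x)=\lfloor mx/n\rfloor$ the input defect is even in $\{0,\pm1\}$, so the master permutation must destroy $(1,2)$-almost APs, i.e.\ the $14$ nonzero patterns with $|s'|\le 1$, $|t'|\le 2$ --- this is exactly what the six listed moduli collectively achieve. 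Your remainder handling (one partial block of size $\delta=n\bmod d$, fixed setwise) differs from the paper's ``all blocks of size $k$ or $k+1$'' but is harmless when $(s,t)=(0,0)$.

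For part (ii) you take a genuinely different route, and the difficulty has been relocated rather than removed. Everything rests on the ``Sidorenko-type lemma'' that long intervals carry permutations destroying $(s,t)$-almost APs and fixing a prescribed initial segment setwise. As stated this is false: if the prescribed segment $\{0,\dots,\delta-1\}$ is short (and $\delta$ is essentially forced once $d$ is chosen), then for $s,t\geq 1$ the non-trivial triple $(a,b,c)=(0,0,1)$ lies inside it, has input defect $1$ and output defect $\rho(1)-\rho(0)=\pm 1$, so it survives. Even setting the initial-segment clause aside, the interval lemma for general $(s,t)$ is the real content of the theorem in your scheme, and no construction is given (a uniformly random permutation of the interval leaves an expected number of bad triples growing with $d$, so no soft argument is available). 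The paper never needs such a lemma: its Stages 1 and 4 rearrange each superblock of $t+1$ or $t+2$ blocks so that every block becomes an AP of common difference exceeding $t$, which forces any within-block $t$-almost AP to be an exact AP (killed by the ordinary integer Sidorenko permutation) and pushes the output defect outside $[-t,t]$. The price is that blocks migrate by up to $t+1$ positions, inflating the master permutation's required parameters to $(4t+7,4t+7)$; this growth with $t$ is why the paper needs its Step 2 (bootstrapping via the product construction of Lemma 3.5 of [H] to infinitely many pairwise coprime moduli carrying $(1,1)$-almost destroyers, then Lemma 2.1 to convert $1\mapsto 1$ into arbitrary patterns coprime to the modulus, then CRT). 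Your observation that the digit reduction contracts $(s,t)$ to universally bounded parameters is correct and would let you dispense with that bootstrapping --- but only after you prove the interval lemma, which is precisely the step your proposal does not supply.
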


While (i) may seem to be a special case of (ii), we state it separately for two reasons. First and foremost, we use (i) in the proof of (ii). Secondly, we have in this special case tried to find the best constant $n_{0}$ which our method will yield. Of course, we expect that Conjecture C of \cite{H} is true, but some additional ideas will probably be needed to prove it in full. 
\par The idea for the proof of (i) is to construct a ``master permutation'' of $\mathbb{Z}_{\sqrt{n_0}}$ which is $(1, \, 2)$-almost AP-avoiding, and then combine this with ideas from Proposition 2.3(ii) and Lemma 3.5 of \cite{H} to construct AP-destroying permutations of $\mathbb{Z}_n$ for all $n \geq n_0$. In our proof, we break down the $(1, \, 2)$-destroyal property into a total of $14$ simpler ones and find, by simple computer search, permutations of $6$ cyclic groups of pairwise relatively prime orders satisfying different subsets of these simpler properties. Finally, the master permutation is obtained via an application of the Chinese Remainder Theorem. The proof of (ii) follows a similar strategy, but this time the ``master permutation'' destroys $(2, \, 2)$-almost APs, and it requires a more subtle application of the aforementioned ideas from 
\cite{H} to get the final result. The full proof of Theorem \ref{thm:main} 
is presented in Section \ref{sect:pfmain}.

The values of $n_0(s, \, t)$ arising from our proof will be extremely large. Though we have tried to optimise the value which our method gives for $n_{0}(0, \, 0)$, it remains completely impractical to attempt to complete the proof of Conjecture C of \cite{H} by a brute-force computer search. The main point of our result is that we think it removes any substantial doubt whether the conjecture is true. We will expand on this issue in the final section of the paper. However, it remains interesting to try to prove the full conjecture and, in particular, to try to do so without resorting to any large-scale computer searches. It follows from Lemma 3.5 of \cite{H} that, if we let $\mathcal{P}$ denote the set of those $n \in \mathbb{N}$ for which $\mathbb{Z}_n$ admits an AP-destroying permutation, then $\mathcal{P}$ is closed under multiplication. Hence, a natural strategy is to first focus on primes. The following result will be proven in Section \ref{sect:pfprimes}:

\begin{theorem}\label{thm:primes}
Let $p$ be a prime such that $p > 3$ and $p \equiv 3 \; ({\hbox{mod $8$}})$. Then there exists a permutation of $\mathbb{Z}_p$ destroying arithmetic progressions.
\end{theorem}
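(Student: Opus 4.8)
The plan is to exhibit an explicit permutation of $\mathbb{Z}_p$ via an algebraic formula, exploiting the field structure available when $p$ is prime. A natural candidate to try first is a map built from squaring, inversion, or a combination thereof, since these are the most familiar nonlinear bijections of $\mathbb{Z}_p$ that interact rigidly with the additive structure. Squaring is not a bijection, but the map $x \mapsto x^2$ composed appropriately — or the Legendre-symbol-twisted map $x \mapsto \chi(x)\cdot x$ where $\chi$ is the quadratic character — is a bijection precisely when $-1$ is a non-square, i.e. when $p \equiv 3 \pmod 4$. This is the reason the hypothesis $p \equiv 3 \pmod 8$ (in particular $p \equiv 3 \pmod 4$) appears. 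So the first step is to fix $\pi$ to be this character-twisted map, suitably normalized so that $\pi(0) = 0$, and verify it is a well-defined permutation of $\mathbb{Z}_p$.

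Next I would translate the AP-destroying condition into an equation over $\mathbb{F}_p$. We need: for no non-constant triple $(a,b,c)$ with $a + c = 2b$ do we have $\pi(a) + \pi(c) = 2\pi(b)$. Writing $a = b - d$, $c = b + d$ with $d \neq 0$, the forbidden event becomes $\pi(b-d) + \pi(b+d) = 2\pi(b)$. With $\pi(x) = \chi(x) x$ (and $\pi(0)=0$), this expands, after multiplying through and squaring to remove the characters, into a polynomial identity in $b$ and $d$ over $\mathbb{F}_p$; the quadratic character $\chi(x) = x^{(p-1)/2}$ can be handled either by case analysis on the quadratic residuosity of $b-d$, $b$, $b+d$, or by clearing it algebraically. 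The key arithmetic input is that $2$ is a non-residue exactly when $p \equiv 3, 5 \pmod 8$, so combined with $p \equiv 3 \pmod 4$ we land in $p \equiv 3 \pmod 8$, which is presumably what makes the relevant factor never vanish. I would carefully enumerate the (at most eight) residuosity patterns for $(b-d, b, b+d)$ and show each leads to a contradiction or forces $d = 0$.

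The main obstacle I anticipate is the case analysis on quadratic residues: one must rule out every sign pattern $(\chi(b-d), \chi(b), \chi(b+d)) \in \{\pm 1\}^3$, and in each the resulting equation in $b, d$ must be shown to have no solution with $d \neq 0$ — this is where the precise congruence class of $p$ modulo $8$ gets used, via facts like $\chi(2) = -1$ and $\chi(-1) = -1$. There will also be edge cases where one of $b-d$, $b$, $b+d$ equals $0$ (so $\chi$ is undefined and $\pi$ takes value $0$), which must be handled separately but are few in number. A secondary point is confirming the map is genuinely AP-destroying and not merely destroying APs in one direction — but since the condition is symmetric in $(a,b,c) \leftrightarrow$ its image under $\pi^{-1}$, and $\pi^{-1}$ has the same algebraic shape (the twisted map is close to an involution up to sign), this should come for free or with minimal extra work.
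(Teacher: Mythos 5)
There is a genuine gap, and it is at the very first step: the candidate map $\pi(x)=\chi(x)\,x$ is not a permutation of $\mathbb{Z}_p$ when $p\equiv 3\ (\mathrm{mod}\ 4)$ --- you have the criterion backwards. Indeed $\pi(1)=1$ and $\pi(-1)=\chi(-1)\cdot(-1)=(-1)(-1)=1$, since $-1\in\mathcal{N}_p$ for such $p$; more generally every non-residue $x$ is sent to $-x$, which is then a residue, so the image of $\pi$ lies entirely in $\mathcal{R}_p\cup\{0\}$. (The map $x\mapsto x^{(p+1)/2}=\chi(x)x$ is a bijection of $\mathbb{Z}_p^{\times}$ precisely when $(p+1)/2$ is coprime to $p-1$, i.e.\ when $p\equiv 1\ (\mathrm{mod}\ 4)$.) Worse, even where it is a bijection the map is piecewise \emph{linear} ($\pi(x)=\pm x$ on each of the two pieces), and a linear map preserves every AP lying inside a single piece; since the quadratic residues contain on the order of $p^2/8$ nontrivial $3$-term APs (e.g.\ $(1,3,5)$ in $\mathbb{Z}_{11}$), no amount of case analysis on the sign pattern $(\chi(b-d),\chi(b),\chi(b+d))$ can rescue this candidate. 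The missing idea is that the permutation must be piecewise \emph{quadratic}: the paper's map is $f(x)=x^2$ for $x$ in $\{0,2,4,\dots,p-1\}$ and $f(x)=\xi x^2$ for $x$ in $\{1,3,\dots,p-2\}$, where $\xi$ and $\xi-1$ are both non-residues (such $\xi$ exists for every $p>3$). Injectivity comes for free because $x$ and $p-x$ have opposite parities, so squaring restricted to either parity class is injective, and the two images $\mathcal{R}_p$ and $\xi\mathcal{R}_p^{\times}=\mathcal{N}_p$ are disjoint. Squaring is what converts the AP condition into a nondegenerate quadratic constraint: in the all-even case one gets $2y^2\equiv 0$, forcing $y\equiv 0$.

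The later parts of your outline do match the paper's strategy in spirit: one performs a case analysis over the (at most six essentially distinct) ways $a,b,c$ distribute over the two pieces, each mixed case reduces to showing a binary quadratic form $Ax^2+Bxy+Cy^2$ has only the trivial zero mod $p$, which holds iff $B^2-4AC\in\mathcal{N}_p$, and the required non-residuosity of each discriminant is exactly what $\xi,\xi-1\in\mathcal{N}_p$ together with $-1,2\in\mathcal{N}_p$ (the latter being where $p\equiv 3\ (\mathrm{mod}\ 8)$ enters) deliver. So the arithmetic inputs you identified are the right ones, but they are applied to the wrong map; without the quadratic (rather than character-twisted linear) construction the argument cannot be completed.
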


As we shall see, there are obvious ways one could try to tinker with this proof so as to make it work also for other primes. So far, however, we have not found any such tinkering that works. This and other outstanding issues will be addressed in Section \ref{sect:final}.
  
\setcounter{table}{0}
\setcounter{equation}{0}

\section{Proof of Theorem \ref{thm:main}}\label{sect:pfmain}

We introduce some further notation. Let $s, \, t \in \mathbb{N}_0$. A permutation $\pi$ of $\mathbb{Z}_n$ is said to \emph{destroy the pattern $s \mapsto t$} if there is no triple $(a, \, b, \, c)$ of elements of $\mathbb{Z}_n$ satisfying
\par (i) $a, \, b, \, c$ are not all equal, 
\par (ii) $a+c-2b \equiv s \; ({\hbox{mod $n$}})$, 
\par (iii) $\pi(a) + \pi(c) - 2 \pi(b) \equiv t \; ({\hbox{mod $n$}})$.
\\
\\
Hence, $\pi$ destroys $(s, \, t)$-almost APs if and only if it destroys the patterns
$s^{\prime} \mapsto t^{\prime}$, for all $s^{\prime} \in [-s, \, s]$ and $t^{\prime} \in [-t, \, t]$. In the following assertions, $\xi^{-1}$ denotes the inverse of $\xi$ modulo $n$. The proofs are almost trivial:

\begin{lemma}\label{lem:normalise}
(i) Suppose $\pi: \mathbb{Z}_n \rightarrow \mathbb{Z}_n$ is a permutation destroying the pattern $0 \mapsto 1$ and that GCD$(t, \, n) = 1$. Then $\pi_{1}: \mathbb{Z}_n \rightarrow \mathbb{Z}_n$ given by 
\begin{equation}\label{eq:zerotoone}
\pi_{1} (x) = t \pi (x)
\end{equation}
is a permutation destroying the pattern $0 \mapsto t$. 
\\
(ii) Suppose $\pi: \mathbb{Z}_n \rightarrow \mathbb{Z}_n$ is a permutation destroying the pattern $1 \mapsto 1$ and that GCD$(s, \, n) =$ GCD$(t, \, n) = 1$. Then $\pi_2: \mathbb{Z}_n \rightarrow \mathbb{Z}_n$ given by
\begin{equation}\label{onetoone}
\pi_{2} (x) = t \pi (s^{-1} x)
\end{equation}
is a permutation destroying the pattern $s \mapsto t$. 
\\
(iii) If $\pi: \mathbb{Z}_n \rightarrow \mathbb{Z}_n$ is a permutation destroying the pattern $s \mapsto t$, then $\pi^{-1}$ destroys $t \mapsto s$. 
\end{lemma}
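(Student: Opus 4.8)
The plan is to prove all three parts of Lemma~\ref{lem:normalise} by direct verification, exploiting the fact that each map is a bijection composed of multiplication by a unit and/or an invertible affine rearrangement, so the only real content is tracking how the quantity $a+c-2b$ transforms.

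\textbf{Part (i).} First I would check that $\pi_1(x) = t\pi(x)$ is a bijection: since $\gcd(t,n)=1$, multiplication by $t$ is a bijection of $\mathbb{Z}_n$, and $\pi$ is a bijection by hypothesis, so the composition is too. Now suppose, for contradiction, that $(a,b,c)$ is a triple witnessing that $\pi_1$ fails to destroy $0\mapsto t$; that is, $a,b,c$ are not all equal, $a+c-2b\equiv 0$, and $\pi_1(a)+\pi_1(c)-2\pi_1(b)\equiv t$. Substituting the definition of $\pi_1$, the last congruence reads $t\bigl(\pi(a)+\pi(c)-2\pi(b)\bigr)\equiv t\pmod n$. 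Since $t$ is a unit modulo $n$, I can cancel it to get $\pi(a)+\pi(c)-2\pi(b)\equiv 1\pmod n$. Together with $a+c-2b\equiv 0$ and the non-triviality of $(a,b,c)$, this exhibits $(a,b,c)$ as a witness that $\pi$ fails to destroy $0\mapsto 1$, contradicting the hypothesis. Hence no such triple exists and $\pi_1$ destroys $0\mapsto t$.

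\textbf{Part (ii).} The argument is the same, now with a change of variable on the domain side as well. The map $\pi_2(x) = t\pi(s^{-1}x)$ is a bijection because $x\mapsto s^{-1}x$, $\pi$, and multiplication by $t$ are all bijections of $\mathbb{Z}_n$ (using $\gcd(s,n)=\gcd(t,n)=1$). Suppose $(a,b,c)$ witnesses that $\pi_2$ fails to destroy $s\mapsto t$: not all equal, $a+c-2b\equiv s$, and $t\bigl(\pi(s^{-1}a)+\pi(s^{-1}c)-2\pi(s^{-1}b)\bigr)\equiv t$. Cancelling $t$ gives $\pi(s^{-1}a)+\pi(s^{-1}c)-2\pi(s^{-1}b)\equiv 1$. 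Setting $a' = s^{-1}a$, $b' = s^{-1}b$, $c' = s^{-1}c$, we have $a'+c'-2b' \equiv s^{-1}(a+c-2b) \equiv s^{-1}s \equiv 1\pmod n$, and $a',b',c'$ are not all equal because $x\mapsto s^{-1}x$ is injective. So $(a',b',c')$ witnesses that $\pi$ fails to destroy $1\mapsto 1$, a contradiction.

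\textbf{Part (iii).} If $\pi^{-1}$ failed to destroy $t\mapsto s$, there would be a non-trivial triple $(a,b,c)$ with $a+c-2b\equiv t$ and $\pi^{-1}(a)+\pi^{-1}(c)-2\pi^{-1}(b)\equiv s$. Writing $a = \pi(a')$, $b=\pi(b')$, $c=\pi(c')$ — legitimate since $\pi$ is onto — the two congruences become $\pi(a')+\pi(c')-2\pi(b')\equiv t$ and $a'+c'-2b'\equiv s$, and $(a',b',c')$ is non-trivial because $\pi$ is injective. This is exactly a witness that $\pi$ fails to destroy $s\mapsto t$, a contradiction.

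There is no genuine obstacle here — as the authors note, the proofs are ``almost trivial.'' The only points requiring a word of care are the bijectivity claims (which rest entirely on the coprimality hypotheses) and the observation that an invertible linear substitution on the index variables preserves the non-triviality condition (i), which follows from injectivity. Everything else is cancellation of a unit in $\mathbb{Z}_n$.
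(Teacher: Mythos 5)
Your proof is correct and is exactly the direct verification the authors had in mind when they wrote that the proofs are ``almost trivial'' (the paper in fact omits the proof entirely). All three parts check out: the bijectivity observations, the cancellation of the unit $t$, the substitution $x \mapsto s^{-1}x$ preserving non-triviality and scaling $a+c-2b$ by $s^{-1}$, and the relabelling via $\pi$ in part (iii) are all sound.
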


\par The reader is encouraged to write their own program to check the correctness of the data in Table \ref{tab:data}, which was obtained by computer search. Note that we are here identifying $\mathbb{Z}_n$ with the set $\{0,\,1,\,\dots,\,n-1\}$, and following standard practice in identifying the string $(a_0,\, a_1,\,\dots,\,a_{n-1})$ with the permutation $\pi: i \mapsto a_i$. 

\begin{table}[ht!]
\begin{center}
\begin{tabular}{|c|c|c|c|} \hline 
$i$ & $n_i$ & $\pi: \mathbb{Z}_{n_i} \rightarrow \mathbb{Z}_{n_i}$ & ${\hbox{Patterns destroyed by $\pi$,}}$ 
\\ $\;$ & $\;$ & $\;$ & ${\hbox{together with $0 \mapsto 0$}}$ \\ \hline \hline
$1$ & $9$ & $(0,\,1,\,8,\,3,\,2,\,6,\,4,\,7,\,5)$ & $0 \mapsto 2, \;\; -1 \mapsto -2$ \\ \hline
$2$ & $11$ & $(0,\,1,\,8,\,10,\,6,\,9,\,5,\,7,\,3,\,2,\,4)$ & $0 \mapsto -2, \;\; -1 \mapsto 2$ \\ \hline
$3$ & $16$ & $(0,\,2,\,5,\,3,\,15,\,12,\,1,\,14,\,10,\,8,\,11,\,13,\,4,\,7,\,6,\,9)$ & $1 \mapsto 1, \;\; -1 \mapsto -1$ \\ \hline
$4$ & $17$ & $(0,\,1,\,3,\,9,\,11,\,7,\,4,\,8,\,15,\,12,\,16,\,10,\,14,\,5,\,2,\,13,\,6)$ & $-1 \mapsto 1, \;\; 1 \mapsto -1$ \\ \hline
$5$ & $19$ & $(0,\,2,\,14,\,4,\,10,\,17,\,9,\,13,\,18,\,3,\,6,\,15,$ & $1 \mapsto 1, \;\; -1 \mapsto 1$ \\ 
$\;$ & $\;$ & $8,\,12,\,5,\,1,\,7,\,11,\,16)$ & $\;$ \\ \hline
$6$ & $23$ & $(0,\,1,\,4,\,3,\,21,\,22,\,2,\,11,\,12,\,7,\,8,\,5,$ & $0 \mapsto 1, \;\; 1 \mapsto 0,$ \\ 
$\;$ & $\;$ & $10,\,9,\,6,\,19,\,16,\,15,\,20,\,17,\,18,\,13,\,14)$ & $0 \mapsto -1, \;\; -1 \mapsto 0$ \\ \hline
$7$ & $25$ & $(0,\,2,\,5,\,1,\,3,\,9,\,13,\,20,\,10,\,15,\,23,\,4,\,21,$ & $1 \mapsto 1, \;\; -1 \mapsto 1$ \\
$\;$ & $\;$ & $17,\,24,\,7,\,22,\,18,\,12,\,16,\,19,\,8,\,14,\,6,\,11)$ & $\;$ \\ \hline
$8$ & $29$ & $(0,\,2,\,1,\,3,\,6,\,5,\,7,\,4,\,13,\,12,\,8,\,10,$ & $1 \mapsto 1$ \\ 
$\;$ & $\;$ & $9,\,24,\,16,\,14,\,20,\,18,\,25,\,23,\,27,\,26,\,28,\,17,$ & $\;$ \\
$\;$ & $\;$ & $15,\,21,\,11,\,19,\,22)$ & $\;$ \\ \hline
$9$ & $31$ & $(0,\,2,\,1,\,3,\,6,\,5,\,7,\,4,\,13,\,12,\,8,\,10,$ & $1 \mapsto 1$ \\
$\;$ & $\;$ & $9,\,11,\,14,\,20,\,27,\,23,\,25,\,24,\,26,\,29,\,28,\,30,$ & $\;$ \\
$\;$ & $\;$ & $16,\,18,\,17,\,19,\,22,\,21,\,15)$ & $\;$ \\ \hline
$10$ & $37$ & $(0,\,2,\,1,\,3,\,6,\,5,\,7,\,4,\,13,\,12,\,8,\,10,\,9,$ & $1 \mapsto 1$ \\ 
$\;$ & $\;$ & $11,\,14,\,18,\,15,\,17,\,21,\,24,\,22,\,32,\,31,\,35,\,30,$ & $\;$ \\ 
$\;$ & $\;$ & $33,\,19,\,34,\,36,\,23,\,20,\,27,\,25,\,29,\,26,\,28,\,16)$ & $\;$ \\ \hline
$11$ & $41$ & $(0,\,2,\,1,\,3,\,6,\,5,\,7,\,4,\,13,\,12,\,8,\,10,$ & $1 \mapsto 1$ \\ 
$\;$ & $\;$ & $9,\,11,\,14,\,18,\,15,\,17,\,21,\,23,\,22,\,25,\,29,\,35,$ & $\;$ \\
$\;$ & $\;$ & $38,\,36,\,31,\,34,\,40,\,19,\,37,\,39,\,16,\,27,\,26,\,28,$ & $\;$ \\
$\;$ & $\;$ & $32,\,24,\,33,\,30,\,20)$ & $\;$ \\ \hline
$12$ & $43$ & $(0,\,2,\,1,\,3,\,6,\,5,\,7,\,4,\,13,\,12,\,8,\,10,$ & $1 \mapsto 1$ \\
$\;$ & $\;$ & $9,\,11,\,14,\,18,\,15,\,17,\,21,\,23,\,22,\,19,\,26,\,35,$ & $\;$ \\
$\;$ & $\;$ & $41,\,36,\,39,\,34,\,16,\,33,\,40,\,38,\,37,\,27,\,24,\,20,$ & $\;$ \\
$\;$ & $\;$ & $28,\,42,\,25,\,31,\,29,\,32,\,30)$ & $\;$ \\ \hline
$13$ & $47$ & $(0,\,2,\,1,\,3,\,6,\,5,\,7,\,4,\,13,\,12,\,8,\,10,$ & $1 \mapsto 1$ \\
$\;$ & $\;$ & $9,\,11,\,14,\,18,\,15,\,17,\,21,\,23,\,22,\,19,\,26,\,20,$ & $\;$ \\
$\;$ & $\;$ & $31,\,16,\,29,\,39,\,41,\,44,\,37,\,43,\,24,\,45,\,38,\,28,$ & $\;$ \\
$\;$ & $\;$ & $46,\,25,\,33,\,27,\,34,\,30,\,40,\,42,\,36,\,32,\,35)$ & $\;$ \\ \hline
$14$ & $13$ & $(0,\,1,\,4,\,2,\,7,\,6,\,12,\,9,\,11,\,8,\,3,\,5,\,10)$ & $0 \mapsto 1$ \\ \hline
$15$ & $49$ & $(0,\,1,\,4,\,2,\,3,\,6,\,7,\,12,\,5,\,8,\,9,\,15,\,11,$ & $0 \mapsto 1$ \\
$\;$ & $\;$ & $13,\,10,\,16,\,14,\,21,\,20,\,22,\,28,\,17,\,25,\,18,\,19,$ & $\;$ \\
$\;$ & $\;$ & $23,\,24,\,35,\,38,\,40,\,37,\,43,\,44,\,48,\,45,\,41,\,42,$ & $\;$ \\
$\;$ & $\;$ & $31,\,47,\,46,\,26,\,32,\,36,\,27,\,30,\,29,\,39,\,34,\,33)$ & $\;$ \\ \hline
\end{tabular} 
\end{center}
\vspace{0.3cm}
\caption{}
\label{tab:data}
\end{table}
For each $i \in \{1,\,2,\,3,\,4,\,6\}$, let $\pi_i$ be the permutation of $\mathbb{Z}_{n_i}$ given in Table \ref{tab:data}. Let $\pi_5$ be the permutation of $\mathbb{Z}_{19}$ given by $\pi_{5}(x) = 2 \pi^{-1}(x)$, where $\pi$ is as in Table \ref{tab:data}, and observe that, by Lemma \ref{lem:normalise}, $\pi_{5}$ destroys the patterns $0 \mapsto 0$, $1 \mapsto 2$, $1 \mapsto -2$. Thus, for each of the $14$ non-zero pairs $(s_i, \, t_i) \in \{-1,\, 0, \, 1\} \times \{-2, \, -1, \, 0, \, 1, \, 2\}$, there is some $i \in [1, \, 6]$ such that $\pi_i$ destroys the pattern $s_i \mapsto t_i$. Let $\sigma: \mathbb{Z}_{\sqrt{n_0}} \rightarrow \prod_{i=1}^{6} \mathbb{Z}_{n_i}$ be the natural isomorphism of abelian groups given by the Chinese Remainder Theorem, i.e.: $\sigma( x \; ({\hbox{mod $\sqrt{n_0}$}})) = \prod_{i=1}^{6} (x \; ({\hbox{mod $n_i$}}))$. We claim that the map $\pi_0: \mathbb{Z}_{\sqrt{n_0}} \rightarrow \mathbb{Z}_{\sqrt{n_0}}$ given by 
\begin{equation}\label{eq:master}
\pi_0 = \sigma^{-1} \circ (\pi_1, \, \dots, \, \pi_{6}) \circ \sigma
\end{equation}
is a permutation destroying $(1,\, 2)$-almost APs. This will be our master permutation for the proof of Theorem \ref{thm:main}(i).

We now show that $\pi_0$ has the desired property. Let $s \in \{0, \pm 1\}$ and let $(a, b, c)$ be a non-trivial (that is, $a, b, c$ are not all equal) triple of elements in $\mathbb{Z}_{\sqrt{n_0}}$ such that $a+c-2b \equiv s\;(\hbox{mod $\sqrt{n_0}$})$. Let $(a_i, \, b_i, c_i)$, $i = 1,\,\dots,\,6$, be the projections on the various factors of this triple, after applying $\sigma$. We note that $a_i + c_i - 2b_i \equiv s \; ({\hbox{mod $n_i$}})$, and
\begin{equation}\label{eq:modmaster}
\pi_0(a) + \pi_0(c)-2\pi_0(b) \equiv \pi_i(a_i) + \pi_i(c_i)-2\pi_i(b_i)\,(\hbox{mod $n_i$})
\end{equation}
for every $i$. Since $(a, b, c)$ is non-trivial, there must be at least one factor, $i_1$ say, such that $(a_{i_1}, b_{i_1}, c_{i_1})$ is non-trivial. We consider two cases:
\\
\\
{\sc Case 1:} There is some $i_2$ such that $a_{i_2}=b_{i_2}=c_{i_2}$. 
\\
\\
Clearly, this can only occur if $s=0$, so $(a, b, c)$ and all its projections are APs. As $\pi_{i_1}$ is AP-destroying, we have $\pi_{i_1}(a_{i_1})+\pi_{i_1}(c_{i_1})-2 \pi_{i_1}(b_{i_1}) \not\equiv 0\;(\hbox{mod $n_{i_1}$})$. Furthermore, we trivially have $\pi_{i_2}(a_{i_2})+\pi_{i_2}(c_{i_2})-2 \pi_{i_2}(b_{i_2}) \equiv 0\;(\hbox{mod $n_{i_2}$})$. Hence, by \ref{eq:modmaster}, $\pi_0(a) + \pi_0(c)-2\pi_0(b)$ is a non-zero multiple of $n_{i_2}>2$.
\\
\\
{\sc Case 2:} $(a_i, \, b_i, \, c_i)$ is non-trivial for every $i$.
\\
\\
For any $t \in \{0, \pm 1, \pm 2\}$ we have that, by choice of $\pi_1, \dots \pi_6$, there exists an $i$ such that $\pi_i$ destroys the pattern $s\mapsto t$. Hence, we have $\pi_0(a)+\pi_0(c)-2\pi_0(b) \not \equiv t\;(\hbox{mod $\sqrt{n_0}$})$ as, by \ref{eq:modmaster}, they are not even congruent modulo $n_i$.
\\
\\
This completes the proof that $\pi_0$ destroys (1, 2)-almost APs.
\\
\par To prove Theorem \ref{thm:main}(i), it thus remains to show how to use the master permutation $\pi_0$ to construct an AP-destroying permutation of $\mathbb{Z}_n$ for every $n \geq n_0$. To begin with, let $m, \, n$ be any positive integers and write $n = k \cdot m + l$, where $0 \leq l < m$. Place the numbers $0,\,1,\,\dots,\,n-1$ clockwise around a circle, and divide them up into consecutive blocks $B_0,\,\dots,\,B_{m - 1}$, each of which has size $k$ or $k+1$. Thus there will be exactly $l$ blocks of size $k+1$. Let $\beta(x)$ denote the number of the block containing $x$, i.e.: $x \in B_{\beta(x)}$. We make two claims:
\\
\\
{\sc Claim 1:} If $k \geq m$ then no matter which blocks have size $k+1$, if $(a, \, b, \, c)$ is an AP modulo $n$, then $\beta(a) + \beta(c) - 2\beta(b) \in \{0, \, \pm 1, \, \pm 2\} \; ({\hbox{mod $m$}})$. 
\\
\\
To see this, consider a ``worst case'' where $a = 0$ and $b$ is the furthest clockwise (last) element of block $B_i$, for some $0 \leq i < m/2$ and such that $2b < n$. Then $k(i+1)-1 \leq b \leq (k+1)(i+1) - 1$ and so $2k(i+1) - 2 \leq 2b \leq 2(k+1)(i+1) - 2$. For the claim to hold, we need $2b$ to lie in one of the blocks $B_{2i-2}, \, \dots, \, B_{2i+2}$. The last element of $B_{2i-3}$ is at most $(k+1)(2i-2)$, while the first element of $B_{2i+3}$ is at least $k(2i+3)$. Hence the claim holds provided
\begin{equation}\label{eq:ineqs}
(k+1)(2i-2) < 2k(i+1) - 2 \;\;\;\; {\hbox{and}} \;\;\;\;
2(k+1)(i+1)-2 < k(2i+3).
\end{equation} 
Both inequalities are easily checked to hold provided $k \geq m$. The symmetric ``worst case'' where $a$ is the last and $b$ the first element in their respective blocks is handled similarly.
\\
\\
{\sc Claim 2:} For any $n$, if numbers are placed in blocks according to $\beta(x) := \lfloor mx/n \rfloor$, then every block has size $k$ or $k+1$ and, for any $(a, \, b, \, c)$ an AP modulo $n$, one has the stronger conclusion that 
$\beta(a) + \beta(c) - 2\beta(b) \in \{0, \, \pm 1\} \; ({\hbox{mod $m$}})$. 
\\
\\
This claim is easily verified by plugging in the formula for $\beta(x)$. 
\\
\par Now suppose $n \geq n_0$. Write $n = k \cdot \sqrt{n_0} + l$, where $0 \leq l < \sqrt{n_0}$. Imagine the numbers $0,\,1,\,\dots,\,n-1$ placed clockwise around a circle. We shall describe a rearrangement of this circular string of $n$ numbers such that, if $\pi(x)$ denotes the location of the number $x$ after the rearrangement, then $\pi$ will be an AP-destroying permutation of 
$\mathbb{Z}_n$. 
\par Firstly, divide the $n$ numbers into consecutive clockwise blocks $B_0,\,\dots,\,B_{\sqrt{n_0} - 1}$, such that $\beta(x) = \lfloor \frac{x \sqrt{n_0}}{n} \rfloor$. For each $i = 0,\,1,\,\dots,\,\sqrt{n_0} - 1$, let $\tau_i$ be a permutation of the elements of block $B_i$ which destroys APs, considering the elements of the block as lying in the group $\mathbb{Z}$ of ordinary integers. It follows from Proposition 2.3(ii) of \cite{H} that such permutations exist. Note that the $\tau_i$ will automatically also destroy APs modulo $n$. Our AP-destroying permutation $\pi$ of $\mathbb{Z}_n$ is gotten by first rearranging the blocks according to the master permutation $\pi_0$, and then applying $\tau_i$ within each block (or vice versa, the two operations commute). In other words, after applying $\pi$ to the circular arrangement of numbers, the blocks $B_{\pi_{0}^{-1}(0)}, \, B_{\pi_{0}^{-1}(1)}, \, \dots, \, B_{\pi_{0}^{-1}(\sqrt{n_0}-1)}$ appear in clockwise order and, within block $B_i$, its integer elements have been permuted according to $\tau_i$. Since $\pi_0$ is $(1,\,2)$-almost AP-destroying, it is easily deduced from Claims 1 and 2 that $\pi$ destroys APs modulo $n$. This completes the proof of Theorem 
\ref{thm:main}(i). 
\\
\par We now turn to part (ii) of the theorem and divide the proof into three steps. 
\\
\\
{\sc Step 1:} There exists a $(2, \, 2)$-almost AP-destroying permutation of $\mathbb{Z}_r$ for some $r$. 
\\
\\
\emph{Proof.} Let $\pi_1,\,\dots,\,\pi_6$ be as above. Using Table \ref{tab:data} and Lemma \ref{lem:normalise}, it is easy to check that we can also find permutations $\pi_7,\,\dots,\,\pi_{15}$ of $\mathbb{Z}_{n_{7}},\,\dots,\,\mathbb{Z}_{n_{15}}$ respectively which collectively destroy all of the patterns $s \mapsto t$, $s \in \{\pm 2\}$, $t \in \{0, \, \pm 1, \, \pm 2\}$. Hence, by a similar argument to above, there exists a $(2, \, 2)$-almost AP-destroying permutation $\chi_{r}$ of $\mathbb{Z}_r$, where $r = \prod_{i=1}^{15} n_i$.
\\
\\
{\sc Step 2:} For every $s, \, t \in \mathbb{N}_0$ there exists $r_{0}(s,\,t) \in \mathbb{N}$ and an $(s, \, t)$-almost AP-destroying permutation of $\mathbb{Z}_{r_{0}(s, \, t)}$. 
\\
\\ 
\emph{Proof.} Let $m$ be an integer such that there exists an AP-destroying permutation of $\mathbb{Z}_m$, and let $\chi_{m}$ be such a permutation. Identify $\mathbb{Z}_{N}$ with the set $\{0,\,1,\,\dots,\,N-1\}$ for any $N$. Let $r = r_{0}(2, \, 2)$ be as in Step 1. Then (see Lemma 3.5 of \cite{H}) the map $\chi_{rm}: \mathbb{Z}_{rm} \rightarrow \mathbb{Z}_{rm}$ given by 
\begin{equation}\label{eq:rm22}
\chi_{rm}(rx+y) = r \chi_{m}(x) + \chi_{r}(y), \;\; 0 \leq x < m, \;\; 0 \leq y < r,
\end{equation}
is easily seen to be $(2,\,2)$-almost AP-destroying, provided $m > 2$. Thus, by the already proven Theorem \ref{thm:main}(i), there exists a $(2, \, 2)$-almost AP-destroying permutation $\chi_{rm}$ of $\mathbb{Z}_{rm}$, for all sufficiently large $m$. Moreover, after a suitable translation, we can choose $\chi_{rm}$ so that $rm - 1$ is a fixed point. In this case, the restriction of $\chi_{rm}$ to $\{0,\,1,\,\dots,\,rm-2\}$ can be considered as a permutation of $\mathbb{Z}_{rm-1}$ and, since $\chi_{rm}$ was $(2, \, 2)$-almost AP-destroying, it is easily seen that this restriction is $(1, \, 1)$-almost AP-destroying. To summarise, we have shown that there is an infinite arithmetic progression $\mathcal{A}$, consisting of numbers congruent to $-1 \; ({\hbox{mod $r$}})$, such that there exists a $(1, \, 1)$-almost AP-destroying permutation of $\mathbb{Z}_n$ for all $n \in \mathcal{A}$. Furthermore, since the first term and common difference of $\mathcal{A}$ are relatively prime, there is an infinite subsequence $a_1,\,a_2,\dots$ of elements of $\mathcal{A}$ consisting of pairwise relatively prime numbers. This follows from Dirichlet's theorem, though it is actually trivial to prove, in a similar manner to Euclid's proof of the existence of infinitely many primes.
\par Now fix $s, \, t \in \mathbb{N}_0$. For a permutation of some $\mathbb{Z}_n$ to be $(s, \, t)$-almost AP-destroying, it just needs to destroy a finite number of patterns. Using Lemma \ref{lem:normalise} it follows that there exists an $i = i(s,\,t)$ and $(1,\,1)$-almost AP-destroying permutations $\chi_{j}$ of $\mathbb{Z}_{a_j}$, $j = 1,\,\dots,\,i$, which collectively destroy every pattern $s^{\prime} \mapsto t^{\prime}$, $|s^{\prime}| \leq s$, $|t^{\prime}| \leq t$. Then, by a construction similar to (\ref{eq:master}), we can construct an $(s,\,t)$-almost AP-destroying permutation of $\mathbb{Z}_{r_{0}(s,\,t)}$, where $r_{0}(s,\,t)= \prod_{j=1}^{i(s,\,t)} a_j$.
\\
\\
{\sc Step 3:} Theorem \ref{thm:main}(ii) holds.
\\
\\
\emph{Proof.} Clearly, it suffices to prove the theorem when $s=t$ and $t=0$ has already been dealt with. So fix $t > 0$. We claim the theorem holds with 
\begin{equation}\label{eq:nzerost}
n_{0}(s, \, t) = \left[ r_{0}(4t+7, \, 4t+7) \right]^2.
\end{equation}
To simplify notation, set $M := r_{0}(4t + 7, \, 4t+7)$ and fix $n \geq M^2$. Our task is to construct a $(t, \, t)$-almost AP-destroying permutation of $\mathbb{Z}_n$. Write $n = kM + l$, $0 \leq l < M$ and place the numbers $0,\,1,\,\dots,\,n-1$ clockwise around a circle. As before, we find it most convenient to describe our permutation in terms of a reearrangement of this circular string of $n$ numbers. The rearrangement will be broken down into 4 stages, of which stages 2 and 3 correspond to the procedure in the proof of part (i) of Theorem \ref{thm:main}, while stages 1 and 4 deal with the fact that $t > 0$. Stage 4 is essentially the ``reverse'' of Stage 1. 
\\
\\
\emph{Stage 1:} First divide the $n$ numbers into $M$ consecutive clockwise blocks $B_0,\,\dots,\,B_{M-1}$, each of size $\lfloor n/M \rfloor$ or $\lceil n/M \rceil$. Unlike in the proof of part (i), here it doesn't matter which blocks have which size, as we will only appeal in the end to Claim 1 from earlier. Let $\beta_{1}(x)$ denote the number of the block containing $x \in [0,\,n)$ at this point. 
\par Next, partition the blocks $B_i$ into $\lfloor \frac{M}{t+1} \rfloor$ ``superblocks'' $C_0,\,\dots,\,C_{\lfloor M/(t+1) \rfloor}$, each consisting of either $t+1$ or $t+2$ ordinary blocks, with the larger superblocks placed furthest clockwise from zero. Thus 
\begin{equation}\label{eq:superblocks}
C_0 = (B_0,\,\dots,\,B_{t}), \;\;\; C_{1} = (B_{t+1},\,\dots,\,B_{2t+1}), \;\;\; {\hbox{etc.}}
\end{equation}
Note that since $M$ is extremely large compared to $t$, there is no problem in making this subdivision of ordinary blocks.
Now rearrange the individual numbers in each superblock in such a way that, if the superblock contains $t+i$ ordinary blocks, $i \in \{1,\,2\}$, then after this rearrangement, the numbers inside any ordinary block will form an AP of common difference $t+i$. For example, consider the superblock $C_0$. There is a unique reordering $(i_0,\,i_1,\,\dots,\,i_t)$ of $(0,\,1,\,\dots,\,t)$ such that
$|B_{i_0}| \geq |B_{i_1}| \geq \dots \geq |B_{i_t}|$ and the indices are increasing as long as the block sizes are constant. After rearrangement, $B_{i_0}$ would contain $0,\,t+1,\,2(t+1),\dots$, $B_{i_1}$ would contain $1,\,t+2,\,2t+3,\dots$ and so on up to $B_{i_t}$ which would contain $t,\,2t+1,\dots$. 
\par Let $\beta_{2}(x)$ denote the (ordinary) block containing $x$ at this point and note that
\begin{equation}\label{eq:betadiff}
\left| \beta_{2}(x) - \beta_{1}(x) \right| \leq t+1,
\end{equation}
where plus one comes from the fact that some superblocks may contain $t+2$ ordinary blocks. 
\\
\\
\emph{Stage 2:} Choose a $(4t+7, \, 4t+7)$-almost AP-destroying permutation $\pi_0$ of $\mathbb{Z}_M$ and permute the ordinary blocks according to this - in other words, after applying $\pi_0$ the blocks $B_{\pi_{0}^{-1}(0)},\,\dots,\,B_{\pi_{0}^{-1}(M-1)}$ appear in clockwise order. Let $\mathcal{B}_i := B_{\pi_{0}^{-1}(i)}$ and let $\beta_{3} (x)$ denote the scripted block containing $x$ at this point. Thus 
\begin{equation}\label{eq:beta3}
\beta_{3} (x) = \pi_{0} (\beta_{2}(x)). 
\end{equation}
\emph{Stage 3:} Let $\tau^{0}, \, \tau^{1}$ be AP-destroying permutations
of $\left\{1,\,\dots,\, \lfloor \frac{n}{M} \rfloor \right\}$ and $\left\{1,\,\dots,\,\lceil \frac{n}{M} \rceil \right\}$ respectively, considered as subsets of $\mathbb{N}$. From Proposition 2.3(ii) of \cite{H} we know that such permutations exist. Given any set $S$ of integers which forms an AP of length $\lfloor n/M \rfloor$ (resp. $\lceil n/M \rceil$), it is obvious how to extract from $\tau^{0}$ (resp. $\tau^{1}$) an AP-destroying permutation of $S$. We perform such a permutation on each scripted block $\mathcal{B}_i$. 
\\
\\
\emph{Stage 4:} Divide the scripted blocks into superblocks in the same way as in (\ref{eq:superblocks}), thus
\begin{equation}
\mathcal{C}_0 = (\mathcal{B}_0,\,\dots,\,\mathcal{B}_{t}), \;\;\; \mathcal{C}_{1} = (\mathcal{B}_{t+1},\,\dots,\,\mathcal{B}_{2t+1}), \;\;\; {\hbox{etc.}}
\end{equation}
We then rearrange the numbers in each superblock in such a way that, for each block $\mathcal{B}_i$, the positions of its elements after rearrangement form an AP of common difference $|\mathcal{C}_0| \in \{t+1, \, t+2\}$. This is accomplished by reversing the procedure in Stage 1 - we hope it is clear what is meant by this and spare the reader further details. Let $\beta_{4}(x)$ be the number of the scripted block containing $x$ at this point and note that, analogous to (\ref{eq:betadiff}), one has
\begin{equation}\label{eq:scriptbetadiff}
\left| \beta_{4}(x) - \beta_{3}(x) \right| \leq t+1.
\end{equation}
\par Let $\pi: \mathbb{Z}_n \rightarrow \mathbb{Z}_n$ be the permutation defined by the rearrangement accomplished in Stages 1-4, that is, $\pi(x)$ denotes the location of the number $x$ in the string after the rearrangement. We claim that $\pi$ is $(t,\,t)$-almost AP-destroying. To see this, let $(a,\,b,\,c)$ be a triple of elements of $\mathbb{Z}_n$ satisfying
\par (I) $a, \, b, \, c$ not all equal,
\par (II) $a + c -2b \equiv \eta \; ({\hbox{mod $n$}})$ for some $\eta \in [-t, \, t]$
\\
and consider two cases:
\\
\\
{\sc Case 1:} $a, \, b, \, c$ all lie in the same ordinary block $B_i$ at the end of Stage 1. 
\\
\\
Since $a, \, b, \, c$ are not all equal and the numbers in $B_i$, as it stands after Stage 1, form an AP with common difference strictly greater than $t$, property (II) can only hold if $a, \, b, \, c$ form a non-trivial AP modulo $n$. But since the appropriate $\tau^{j}$ destroys APs of integers, and hence also APs modulo $n$ since $n$ is much larger than $M$, we see that the locations of $a, \, b, \, c$ will not form an AP modulo $n$ after Stage 3. But they will still lie in the same scripted block hence, after Stage 4, $\pi(a)+\pi(c)-2\pi(b)$ must be a non-zero multiple of $t+i$, $i \in \{1,\,2\}$. In particular, $\pi(a) + \pi(c) -2\pi(b) \; ({\hbox{mod $n$}}) \not\in [-t, \, t]$. 
\\
\\
{\sc Case 2:} $a, \, b, \, c$ are not all in the same ordinary block upon completion of Stage 1.
\\
\\
By definition, what we're assuming in this case is that $\beta_{2}(a), \, \beta_{2}(b), \, \beta_{2}(c)$ are not all equal. If $a, \, b, \, c$ formed an AP modulo $n$ then it would follow from Claim 1 in the proof of part (i) of Theorem \ref{thm:main} that 
\begin{equation}\label{eq:firstest}
\beta_{1}(a) + \beta_{1}(c) - 2\beta_{1}(b) \; ({\hbox{mod $M$}}) \in [-2, \, 2].
\end{equation}
Given that (II) holds and that each ordinary block has size greater than $t$, we can at least be sure that
\begin{equation}\label{eq:secondest}
\beta_{1}(a) + \beta_{1}(c) - 2 \beta_{1}(b) \; ({\hbox{mod $M$}}) \in [-3, \, 3].
\end{equation}
Combined with (\ref{eq:betadiff}) it follows that
\begin{equation}\label{eq:thirdest}
\beta_{2}(a) + \beta_{2}(c) - 2\beta_{2}(b) \; ({\hbox{mod $M$}}) \in [-(4t+7), \, 4t+7].
\end{equation}
But the permutation $\pi_0$ is $(4t+7, \, 4t+7)$-almost AP-destroying and thus, by (\ref{eq:beta3}), 
\begin{equation}\label{eq:afterest}
\beta_{3}(a) + \beta_{3}(c) - 2\beta_{3}(b) \; ({\hbox{mod $M$}}) \not\in [-(4t+7), \, 4t+7].
\end{equation}
By (\ref{eq:scriptbetadiff}), this implies in turn that
\begin{equation}\label{eq:diffover3}
\beta_{4}(a) + \beta_{4}(c) - 2 \beta_{4}(b) \; ({\hbox{mod $M$}}) \not\in [-3, \, 3].
\end{equation}
Since the scripted blocks still have size greater than $t$, it follows from Claim 1 on page 4 that $\pi(a) + \pi(c) - 2\pi(b) \; ({\hbox{mod $n$}}) \not\in [-t, \, t]$, as desired. 
\qed

\setcounter{equation}{0}

\section{Proof of Theorem \ref{thm:primes}}\label{sect:pfprimes}

Let $p$ be a prime. We denote by $\mathcal{R}_p$ (resp. $\mathcal{N}_p$)
the collection of quadratic residues (resp. non-residues) modulo $p$. We will
be slightly abusive in this context and use the same notations to denote 
subsets of $\mathbb{Z}_p$ and of $\mathbb{Z}$. Hence, as subsets of 
$\mathbb{Z}_p$ one has
\begin{equation}\label{eq:residues}
\mathcal{R}_p = \{x^2 : x \in \mathbb{Z}_p \}, \;\;\;\; \mathcal{N}_p = 
\mathbb{Z}_p \backslash \mathcal{R}_p,
\end{equation}
whereas, as subsets of $\mathbb{Z}$,
\begin{equation}\label{eq:intresidues}
\mathcal{R}_p = \left\{x \in \mathbb{Z} : \left( \frac{x}{p} \right) \in 
\{0,\,1\} \right\}, \;\;\;\; \mathcal{N}_p = 
\mathbb{Z} \backslash \mathcal{R}_p = \left\{x \in \mathbb{Z} : \left( 
\frac{x}{p} \right) = -1 \right\}.
\end{equation}

\begin{lemma}\label{lem:othree}
Let $p$ be a prime such that $p \equiv 3 \; ({\hbox{mod $8$}})$. Then 
both $-1$ and $2$ are in $\mathcal{N}_p$.
\end{lemma}

\begin{proof}
This is elementary number theory. That $-1$ is not a square mod $p$ follows
from Lagrange's theorem for groups. That $2$ is not a square follows from
Gauss' Lemma.
\end{proof}

\begin{lemma}\label{lem:ofour}
Let $p > 3$ be a prime. Then there exists an integer $\xi$ such that both
$\xi$ and $\xi - 1$ lie in $\mathcal{N}_p$.
\end{lemma}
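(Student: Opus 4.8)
The plan is to show that the number of $x \in \mathbb{Z}_p$ for which \emph{both} $x$ and $x+1$ lie in $\mathcal{N}_p$ is strictly positive as soon as $p > 3$; once such an $x$ is found, lifting it to an integer and setting $\xi := x+1$ finishes the proof, since then $\xi$ and $\xi - 1$ are both non-residues. Write $\chi(x) := \left( \frac{x}{p} \right)$ for the Legendre symbol, so $\chi$ is the quadratic character of $\mathbb{Z}_p$ extended by $\chi(0) = 0$. The one input I would recall is the elementary character-sum identity
\begin{equation}\label{eq:chisum}
\sum_{x \in \mathbb{Z}_p} \chi(x)\,\chi(x+1) = -1,
\end{equation}
which follows by observing that for $x \neq 0$ one has $\chi(x)\chi(x+1) = \chi(x)^2\chi(1 + x^{-1}) = \chi(1 + x^{-1})$, that $1 + x^{-1}$ runs over $\mathbb{Z}_p \setminus \{1\}$ as $x$ runs over $\mathbb{Z}_p \setminus \{0\}$, and that $\sum_{y \in \mathbb{Z}_p} \chi(y) = 0$.

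Next I would count $N := \#\{\, x \in \mathbb{Z}_p : \chi(x) = \chi(x+1) = -1 \,\}$. Since $\chi(x)$ and $\chi(x+1)$ both take values in $\{\pm 1\}$ precisely when $x \notin \{0, -1\}$, we have
\begin{equation}\label{eq:Ncount}
N = \frac{1}{4} \sum_{x \in \mathbb{Z}_p \setminus \{0, -1\}} \bigl( 1 - \chi(x) \bigr)\bigl( 1 - \chi(x+1) \bigr).
\end{equation}
Expanding the product and evaluating the four resulting sums with the help of \eqref{eq:chisum} and $\sum_y \chi(y) = 0$ (keeping track of the two omitted terms $x = 0$ and $x = -1$, at which $\chi$ vanishes) yields
\begin{equation}\label{eq:Nvalue}
N = \tfrac{1}{4}\bigl( p - 2 + \chi(-1) \bigr),
\end{equation}
i.e.\ $N = (p-1)/4$ if $p \equiv 1 \pmod 4$ and $N = (p-3)/4$ if $p \equiv 3 \pmod 4$. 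In either case $N \geq 1$ whenever $p > 3$ (the bound fails only for $p = 3$, which matches the hypothesis), so there is an $x \in \mathbb{Z}_p$ with $x, x+1 \in \mathcal{N}_p$; any integer representative of $x+1$ serves as $\xi$.

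I do not expect a genuine obstacle: the only point requiring care is the bookkeeping around $x = 0$ and $x = -1$, which are exactly the residues where $\chi$ is zero and where the identity $\chi(x)^2 = 1$ breaks down, so the excluded terms in \eqref{eq:Ncount} must be handled explicitly when passing to the full sums. Everything else — the derivation of \eqref{eq:chisum} and the expansion leading to \eqref{eq:Nvalue} — is a routine quadratic character-sum computation. (A purely pigeonhole argument over $1, 2, \dots, p-1$ would reduce matters to excluding a perfectly alternating residue/non-residue pattern, but ruling that out still essentially needs \eqref{eq:chisum}, so the direct count is the cleaner route.)
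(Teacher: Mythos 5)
Your proof is correct, but it takes a genuinely different route from the paper. You compute the exact number $N = \tfrac14(p-2+\chi(-1))$ of consecutive non-residue pairs via the standard character-sum identity $\sum_x \chi(x)\chi(x+1) = -1$, and your bookkeeping around the excluded points $x=0,-1$ checks out (for instance $N=1$ for $p=5,7$, and $N=0$ for $p=3$, consistent with the hypothesis $p>3$). The paper instead gives a one-line pigeonhole argument: since $|\mathcal{N}_p| = |\mathcal{R}_p| - 1$, if no two cyclically adjacent elements of $\mathbb{Z}_p$ were both non-residues then every gap between consecutive non-residues would contain exactly one residue except for a single gap containing two; as $0$ and $1$ are adjacent residues, that exceptional gap is $\{0,1\}$, forcing the pattern to alternate from $2$ onward and hence forcing $4 \in \mathcal{N}_p$ --- contradicting $4 \in \mathcal{R}_p$. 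This is why the paper cites precisely the facts $|\mathcal{N}_p| = |\mathcal{R}_p| - 1$ and $\{0,1,4\} \subseteq \mathcal{R}_p$. Your closing parenthetical is therefore slightly off: ruling out the alternating pattern does \emph{not} require the character-sum identity; the single residue $4$ does the job. What your approach buys is an exact count of consecutive non-residue pairs (more information than needed, and the natural starting point for generalisations to prescribed sign patterns); what the paper's buys is brevity and the avoidance of any character-sum manipulation.
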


\begin{proof}
This follows immediately from the fact that $|\mathcal{N}_p| = |\mathcal{R}_p| - 1$ and $\{0,\,1,\,4\} \subseteq \mathcal{R}_p$.
\end{proof}

\begin{lemma}\label{lem:ofive}
Let $p > 2$ be a prime, and let $a,\,b,\,c$ be integers not divisible by $p$. 
Then $(0,\,0)$ is the only solution in $\mathbb{Z}_p$ to the 
congruence $ax^2 + bxy + cy^2 \equiv 0 \; ({\hbox{mod $p$}})$ if and only
if $b^2 - 4ac \in \mathcal{N}_p$.
\end{lemma}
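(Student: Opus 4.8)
The plan is to reduce the general binary quadratic form to a diagonal one by completing the square. Since $p \nmid a$, the integer $4a$ is invertible modulo $p$, and one has the identity
\[
4a(ax^2 + bxy + cy^2) \equiv (2ax+by)^2 - (b^2-4ac)\,y^2 \pmod{p}.
\]
Writing $D := b^2 - 4ac$, the substitution $(x,y) \mapsto (u,v) := (2ax+by,\, y)$ is a linear endomorphism of $\mathbb{Z}_p^2$ with determinant $2a \not\equiv 0 \pmod p$, hence a bijection fixing the origin. Consequently $ax^2 + bxy + cy^2 \equiv 0 \pmod p$ has a nontrivial solution in $\mathbb{Z}_p$ if and only if $u^2 \equiv D\,v^2 \pmod p$ does, so it suffices to analyse the latter congruence.

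I would then split into cases according to the value of the Legendre symbol $\left( \frac{D}{p} \right)$. If $\left( \frac{D}{p} \right) = 1$, pick $d \not\equiv 0$ with $D \equiv d^2$; then $(u,v) = (d,1)$ is a nontrivial zero, so the form has one too, while $D \notin \mathcal{N}_p$. If $\left( \frac{D}{p} \right) = 0$, i.e.\ $p \mid D$, then $u^2 \equiv D v^2$ collapses to $u \equiv 0$, and $(u,v) = (0,1)$ is again a nontrivial zero, while still $D \notin \mathcal{N}_p$. Finally, if $\left( \frac{D}{p} \right) = -1$ and $(u,v) \neq (0,0)$ satisfies $u^2 \equiv D v^2$, then $v \not\equiv 0$ would force $D \equiv (uv^{-1})^2$ to be a nonzero square, a contradiction; hence $v \equiv 0$, which forces $u \equiv 0$, contradicting $(u,v) \neq (0,0)$. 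So in this case $(0,0)$ is the unique solution, and $D \in \mathcal{N}_p$.

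Collating the three cases yields the stated equivalence. I expect no serious obstacle: the only two points that need to be stated with care are that the change of variables really is a bijection of $\mathbb{Z}_p^2$ (this is exactly where the hypothesis $p \nmid a$ is used), and that the degenerate case $p \mid D$ must be included in the analysis, since there both ``$(0,0)$ is the only solution'' and ``$D \in \mathcal{N}_p$'' fail, so the biconditional still holds. Everything else is a one-line verification.
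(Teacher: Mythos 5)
Your proof is correct and takes essentially the same route as the paper: both complete the square (the paper solves directly for $xy^{-1}$ via the quadratic formula, you diagonalise the form by an invertible linear change of variables, which is the same computation). Your explicit treatment of the degenerate case $p \mid b^2-4ac$ is a nice touch of extra care that the paper's one-line proof glosses over, and it is indeed needed given the paper's convention that $0 \in \mathcal{R}_p$.
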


\begin{proof}
Elementary. Suppose we have a solution with $y \not\equiv 0$. Then 
completion of squares gives 
\begin{equation}\label{eq:disc}
xy^{-1} \equiv (2a)^{-1} \left( -b \pm \sqrt{b^2 - 4ac} \right),
\end{equation}
which is meaningful if and only if $b^2 - 4ac \in \mathcal{R}_p$. 
\end{proof}

We are now ready to prove Theorem \ref{thm:primes}. Let $p > 3$ be a prime congruent to 
$3 \; ({\hbox{mod $8$}})$ and let $\xi$ be any integer satisfying the
conditions of Lemma \ref{lem:ofour}.
Define the map $f: \mathbb{Z}_p \rightarrow \mathbb{Z}_p$ as follows:
\begin{equation}\label{eq:f}
f(x) = \left\{ \begin{array}{lr} x^2, & {\hbox{if $x \in \{0,\,2,\, 4,\,\dots,\,p-1\}$}}, 
\\ \xi x^2, & {\hbox{if $x \in \{1,\,3,\,5,\, \dots, \,p-2\}$}}. \end{array} \right.
\end{equation}
Note that $f$ is one-to-one. Let $(a,\, b,\, c)$ be an arithmetic 
progression modulo $p$. Denote $a := x$, $b := x+y$, $c := x+2y$.
We need to show that
\begin{equation}\label{eq:apf}
f(x) + f(x+2y)\equiv 2f(x+y) \Rightarrow y\equiv 0.
\end{equation}
Denote also $\mathbb{E} := \{0,\, 2,\, 4,\, \dots, \, p-1\}$ and 
$\mathbb{O} := \{1,\, 3,\, 5,\, \dots, \, p-2\}$. We 
do a case-by-case analysis. 
\\
\\
{\sc Case 1:} $a,\, b,\, c \in \mathbb{E}$. 
\\
\\
The congruence in (\ref{eq:apf}) becomes 
\begin{equation}\label{eq:cong}
x^2 + (x+2y)^2 \equiv 2 (x+y)^2,
\end{equation}
which reduces to $2y^2 \equiv 0$, hence $y \equiv 0$ since $p > 2$. Thus
(\ref{eq:apf}) holds in this case.
\\
\\
The case when $a,\, b,\, c \in \mathbb{O}$ is completely analogous to Case 1.
\\
\\
{\sc Case 2:} $a,\, b \in \mathbb{E}$, $c \in \mathbb{O}$.
\\
\\
The congruence in (\ref{eq:apf}) becomes
\begin{equation}\label{eq:cong2}
x^2 + \xi (x+2y)^2 \equiv 2(x+y)^2,
\end{equation}
which can be expanded as
\begin{equation}\label{eq:exp2}
(\xi -1) x^2 + 4(\xi - 1)xy + 2(2\xi -1) y^2 \equiv 0.
\end{equation}
By the choice of $\xi$ we know that $\xi-1 \in \mathcal{N}_p$, so
in particular $\xi -1 \not\equiv 0$. Also, $2\xi -1 \not\equiv 0$, 
for otherwise we would have $\xi - 1 \equiv -1/2$, contradicting 
Lemma \ref{lem:othree} and the fact that $\xi - 1 \in \mathcal{N}_p$. 
Hence all the 
coefficients in the binary quadratic form in (\ref{eq:exp2}) 
are non-zero modulo $p$,
so we can apply Lemma \ref{lem:ofive} and deduce that there is no solution with
$y \not\equiv 0$ if and only if 
$[4(\xi-1)]^2 - 8(\xi-1)(2\xi-1) \in \mathcal{R}_p$. But
\begin{equation}\label{eq:shit2}
[4(\xi - 1)]^2 - 8(\xi-1)(2\xi -1) = [4(\xi-1)]^2 
\cdot \left( \frac{-1}{2(\xi -1)} \right),
\end{equation}
hence this lies in $\mathcal{R}_p$ if and only if $\frac{-1}{2(\xi-1)}$
does so. But the latter contradicts Lemma \ref{lem:othree} 
and the choice of $\xi$. 
\\
\\
The case when $a \in \mathbb{O}$, $b,\, c \in \mathbb{E}$ is
completely analogous to Case 2. 
\\
\\
{\sc Case 3:} $a,\, c \in \mathbb{E}$, $b \in \mathbb{O}$.
\\
\\
The congruence in (\ref{eq:apf}) becomes
\begin{equation}\label{eq:cong3}
x^2 + (x+2y)^2 \equiv 2\xi (x+y)^2,
\end{equation}
which can be expanded as
\begin{equation}\label{eq:exp3}
(\xi -1) x^2 + 2(\xi - 1)xy + (\xi -2) y^2 \equiv 0.
\end{equation}
Once again, all the coefficients are non-zero modulo $p$,
so we can apply Lemma \ref{lem:ofive} and deduce that there is no solution with
$y \not\equiv 0$ if and only if $[2(\xi-1)]^2 - 4(\xi-1)(\xi-2) \in
\mathcal{R}_p$. But
\begin{equation}\label{eq:shit3}
[2(\xi - 1)]^2 - 4(\xi-1)(\xi -2) = [2(\xi-1)]^2 
\cdot \left( \frac{1}{\xi -1} \right),
\end{equation}
hence this lies in $\mathcal{R}_p$ if and only if $\frac{1}{\xi-1}$
does so. But the latter contradicts the choice of $\xi$. 
\\
\\
{\sc Case 4:} $a,\, b \in \mathbb{O}$, $c \in \mathbb{E}$.
\\
\\
The congruence in (\ref{eq:apf}) becomes
\begin{equation}\label{eq:cong4}
\xi x^2 + (x+2y)^2 \equiv 2\xi (x+y)^2,
\end{equation}
which can be expanded as
\begin{equation}\label{eq:exp4}
(\xi -1) x^2 + 4(\xi - 1)xy + 2(\xi -2) y^2 \equiv 0.
\end{equation}
The coefficients are all non-zero modulo $p$ so,
by Lemma \ref{lem:ofive}, there is no solution with
$y \not\equiv 0$ if and only if $[4(\xi-1)]^2 - 8(\xi-1)(\xi-2) \in
\mathcal{R}_p$. But
\begin{equation}\label{eq:shit4}
[4(\xi - 1)]^2 - 8(\xi-1)(\xi -2) = [4(\xi-1)]^2 
\cdot \left( \frac{\xi}{2(\xi -1)} \right),
\end{equation}
hence this lies in $\mathcal{R}_p$ if and only if $\frac{\xi}{2(\xi-1)}$
does so. Once again, this contradicts Lemma \ref{lem:othree} 
and the choice of $\xi$. 
\\
\\
The case when $a \in \mathbb{E}$, $b,\, c \in \mathbb{O}$ is
completely analogous to Case 4. 
\\
\\
{\sc Case 5:} $a,\, c \in \mathbb{O}$, $b \in \mathbb{E}$.
\\
\\
The congruence in (\ref{eq:apf}) becomes
\begin{equation}\label{eq:cong5}
\xi x^2 + \xi (x+2y)^2 \equiv 2(x+y)^2,
\end{equation}
which can be expanded as
\begin{equation}\label{eq:exp5}
(\xi -1) x^2 + 2(\xi - 1)xy + (2\xi -1) y^2 \equiv 0.
\end{equation}
The coefficients are still non-zero modulo $p$ so,
by Lemma \ref{lem:ofive}, there is no solution with
$y \not\equiv 0$ if and only if $[2(\xi-1)]^2 - 4(\xi-1)(2\xi-1) \in
\mathcal{R}_p$. But
\begin{equation}\label{eq:shit5}
[2(\xi - 1)]^2 - 4(\xi-1)(2\xi -1) = [2(\xi-1)]^2 
\cdot \left( \frac{-\xi}{\xi -1} \right),
\end{equation}
hence this lies in $\mathcal{R}_p$ if and only if $\frac{-\xi}{\xi-1}$
does so. Once again we have a contradiction to 
Lemma \ref{lem:othree} and the choice of $\xi$. This covers all possible cases and completes the proof of Theorem \ref{thm:primes}.
\qed

\setcounter{equation}{0}

\section{Final remarks}\label{sect:final}

We don't know of any reason to suspect that Conjecture C of \cite{H} may be false. However, a full proof of it remains a challenging problem, given that the value of $n_0$ in Theorem \ref{thm:main}(i) still leaves a brute-force computational attack way out of reach. A less ambitious goal could be to see how small $n_0$ can be made using the ideas of this paper.
\par An obvious approach to Conjecture C is to consider a uniformly random permutation $\pi$ of $\mathbb{Z}_n$ and let $X$ be the number of non-trivial APs not destroyed by $\pi$. It is easy to check that $\mathbb{E}[X] =  \Theta (n)$. As a first hypothesis, one might guess that the events that individual APs are not destroyed by $\pi$ are almost independent and hence that $X$ is approximately Poisson distributed. In that case, the proportion of AP-destroying permutations of $\mathbb{Z}_n$ would decrease exponentially with $n$. A priori, this approach could still only yield Conjecture C for $n$ sufficiently large, but probably with a value of $n_0$ which is sufficiently small to complete the proof by direct computation. However, it remains an open question whether this intuition can be made rigorous. A successful moment analysis \emph{was} carried out in \cite{JS} for permutations destroying APs of length $4$, with the important point being that the expected number of non-trivial $4$-term APs not destroyed by a random permutation of $\mathbb{Z}_n$ is $O(1)$. 
This provides some additional circumstantial evidence for Conjecture C. 
\par An AP of length $4$ is a common solution to the pair of linear equations $x_1 - 2x_2 + x_3 = 0$, $x_2 - 2x_3 + x_4 = 0$. It is natural to ask the following more general question:

\begin{question}\label{quest:genequation}
Let $k,\,m \in \mathbb{N}$ and let $\mathcal{L}_{i}(x_1,\,\dots,\,x_k) = 0$, $i = 1,\,\dots,\,m$, be linear equations with integer coefficients. When is it the case that there is an $n_0 = n_0 (\mathcal{L}_1,\,\dots,\,\mathcal{L}_m)$ such that there exists a permutation of $\mathbb{Z}_n$ destroying all non-trivial solutions (as defined in \cite{R}) to $\mathcal{L}_1 = \dots = \mathcal{L}_m = 0$ for all $n \geq n_0$ ?
\end{question}

Indeed, it seems reasonable even to ask this kind of question for general polynomial equations, not just linear ones. Here we confine further speculation to the case of a single linear equation $a_0 + \sum_{i=1}^{k} a_i x_i = 0$, $a_i \neq 0 \; \forall \, i > 0$. Recall that the equation is said to be \emph{(translation) invariant} if $a_0 = \sum_{i=1}^{k} a_i = 0$. One can make the following straightforward observations:
\\
\par (i) Suppose $a_0 \neq 0$. If $n > 2|a_0|$, then there exists a unit $u \in \mathbb{Z}_{n}^{\times}$ such that $ua_0 \not\equiv a_0 \; ({\hbox{mod $n$}})$. Then the permutation $x \mapsto ux \; ({\hbox{mod $n$}})$ will destroy all solutions to the equation. 
\par (ii) Suppose $\sum_{i=1}^{k} a_i \neq 0$. If $n > \left| \sum_{i=1}^{k} a_i \right|$ then the translation $x \mapsto x+1 \; ({\hbox{mod $n$}})$ will destroy all solutions to the equation.
\par (iii) Suppose the equation is invariant and $k = 2$, so the equation reads
$a_1 (x_1 - x_2) = 0$, for some $a_1 \neq 0$. A permutation $\pi$ of $\mathbb{Z}_n$ will destroy all non-trivial solutions if and only if, whenever $x_1$ and $x_2$ are distinct but lie in the same congruence class modulo $\frac{n}{{\hbox{GCD}}(a_1,\,n)}$, then $\pi(x_1)$ and $\pi(x_2)$ lie in different congruence classes. Clearly, such a permutation exists for all $n \geq \left[{\hbox{GCD}}(a_1, \, n)\right]^2$. 
\\
\par
Hence, for a single linear equation, Question \ref{quest:genequation} is only interesting if the equation is invariant and $k \geq 3$. If we then consider a uniformly random permutation of $\mathbb{Z}_n$, it is easy to see that the expected number of non-trivial solutions not destroyed is $\Theta (n^{k-2})$. This suggests that permutations destroying all non-trivial solutions should exist when $k = 3$, but perhaps do not do so at all when $k > 3$. We therefore ask the following:

\begin{question}\label{quest:singleeq}
Let $\mathcal{L}(x_1,\,\dots,\,x_k) = a_0 + \sum_{i=1}^{k} a_i x_i = 0$, $a_i, \in \mathbb{Z}$, $a_i \neq 0 \; \forall \, i > 0$, be a linear equation. Is it true that the following statements are equivalent: 
\par (i) There is an $n_0 = n_{0}(\mathcal{L})$ such that, for every $n \geq n_0$, there exists a permutation $\pi$ of $\mathbb{Z}_n$ destroying all non-trivial solutions of $\mathcal{L} = 0$. 
\par (ii) Either the equation $\mathcal{L} = 0$ is variant, or it is invariant 
and $k \in \{2, \, 3\}$ ?
\end{question} 

As a final remark, note that in Propostion 2.3(i) of \cite{H} we proved that no permutation of any finite abelian group can destroy all non-trivial solutions to the Sidon equation $x_1 + x_2 - x_3 - x_4 = 0$. However, we do not see at this point how to modify that argument for equations in four or more variables in general. 



\vspace*{1cm}

\end{document}